\newcommand{\be}[1] {\begin{equation} \label{#1}}
\newcommand{\ee} {\end{equation}}
\newcommand{\bea} {\begin{eqnarray}}
\newcommand{\eea} {\end{eqnarray}}
\newcommand{\beas} {\begin{eqnarray*}}
\newcommand{\eeas} {\end{eqnarray*}}
\newcommand{\st}{ \mathrel{\ooalign{$\ni$\cr\kern-1pt$-$\kern-6.5pt$-$}}}
\newtheorem{theorem}{Theorem}
\newcommand{\N}{\mathbb{N}}
\newcommand{\X}{\mathcal{X}}
\definecolor{DarkGreen}{rgb}{0,0.5,0}
\title{A Lower Bound on the Mixing Time of Uniformly Ergodic Markov Chains
in Terms of the Spectral Radius} \author{Dawn B.\ Woodard \\ School of Operations Research and Information
Engineering\\
and Department of Statistical Science\\ Cornell University
\footnote{Thanks to Aaron Smith at ICERM for his insightful suggestions.  This work was supported in part by National
Science Foundation Grant Number DMS-1209103. }}
\begin{document}

\maketitle

\abstract{

We give a bound on the mixing time of a uniformly ergodic, reversible
Markov chain in terms of the spectral radius of the transition kernel.
This bound has been established previously in finite state spaces, and
is widely believed to hold in general state spaces, but a proof has
not been provided to our knowledge.

\vspace{0.8cm}
  \noindent \textsc{\bf{Keywords: }} Markov chains, mixing time,
  spectral radius, spectral gap, general state space.  }

\vspace{1cm}

Consider a uniformly ergodic Markov chain with transition kernel $T$
on general (countably generated) state space $\X$, with stationary distribution $\pi$.
The distance of a Markov chain to stationarity is commonly measured using
the total variation norm, defined for a signed measure $\mu$ as
\begin{align*}
\|\mu\|_{TV} &= \sup_{A \subset \X} |\mu(A)|
\end{align*}
where the supremum is over measurable sets $A$.  The mixing time of the
Markov chain is the number of iterations required for the total
variation distance to stationarity to drop below a particular
threshold $\epsilon \in (0, \frac{1}{2})$,
for all initial states:
$$\tau_\epsilon \equiv  \min\{n: \sup_{x \in \X}\|T^n(x,\cdot) -
\pi\|_{TV} \leq \epsilon \}$$
\citep{aldo:1982}.  For $T$ uniformly ergodic we have $\tau_\epsilon <
\infty$ (Meyn and Tweedie 1993, Theorem 16.0.2\nocite{meyn:twee:1993}).

For $\X$ finite, $T$ reversible, and $\epsilon = {(2e)^{-1}}$,
Proposition 8(a) of \cite{aldo:1982} provides a lower bound on
$\tau_\epsilon$ in terms of the spectral radius of $T$.  The extension
to general $\epsilon$ is immediate, but the extension to general state
spaces is not.  We provide this extension in
Theorem~\ref{Thm:MixTime}.
\begin{theorem}\label{Thm:MixTime}
For a uniformly ergodic, reversible Markov chain $T$ on countably
generated state space $\X$, the mixing time $\tau_\epsilon$ satisfies
$$\tau_\epsilon \geq \frac{\ln (2\epsilon)}{\ln \rho}$$
where $\rho\in [0,1)$ is the $L_2(\pi)$ spectral radius of $T$ and
  $\epsilon \in (0,\frac{1}{2})$.  When $\rho = 0$ this is
  taken to mean that $\tau_\epsilon > 0$.
\end{theorem}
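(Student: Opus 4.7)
The plan is to reduce the theorem to the single inequality
\[
\rho^n \le 2\sup_{x \in \X}\|T^n(x, \cdot) - \pi\|_{TV} \qquad \text{for every } n \ge 1,
\]
after which evaluating at $n = \tau_\epsilon$, where the right-hand side is at most $2\epsilon$, and taking logarithms (noting $\ln\rho < 0$ and $\ln(2\epsilon) < 0$) yields $\tau_\epsilon \ge \ln(2\epsilon)/\ln\rho$; the $\rho = 0$ case reduces to $\tau_\epsilon \ge 1 > 0$. The finite-state proof obtains this inequality by evaluating an eigenfunction of $T$ with eigenvalue of modulus $\rho$ at its $L_\infty$-argmax. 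In a general state space such an eigenfunction need not exist, and truncating an $L_2$ approximate eigenfunction costs uncontrolled factors of $\|f\|_\infty$ in the pointwise total-variation bound, so I will avoid eigenfunctions entirely and work with the single operator $S := T - \Pi$, where $\Pi f := \pi(f)\cdot 1$ is the projection onto constants. Since $\Pi T = T\Pi = \Pi$, a binomial expansion gives $S^n = T^n - \Pi$, so $S^n$ is a genuine $n$-th power of $S$.

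The heart of the argument is to compute the operator norms of $S^n$ on $L_2(\pi)$ and on $L_\infty(\pi)$. On $L_2(\pi)$, reversibility makes $T$ self-adjoint; $S^n$ vanishes on constants and acts as $T^n$ on the orthogonal complement $L_2(\pi)_0$, so $\|S^n\|_{L_2 \to L_2}$ equals the $L_2$ operator norm of $T^n|_{L_2(\pi)_0}$, which by self-adjointness equals the spectral radius $\rho^n$. On $L_\infty(\pi)$, writing
\[
S^n f(x) \;=\; \int f(y)\bigl[T^n(x, dy) - \pi(dy)\bigr]
\]
and using that the integrating signed measure has total mass zero, a direct duality calculation gives
\[
\|S^n\|_{L_\infty \to L_\infty} \;=\; 2\sup_{x \in \X}\|T^n(x, \cdot) - \pi\|_{TV}.
\]

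To bridge these two norms I will apply Riesz-Thorin interpolation with endpoints $L_1(\pi)$ and $L_\infty(\pi)$. Reversibility enters a second time through the identity $\int Tf \cdot g\, d\pi = \int f \cdot Tg\, d\pi$, which exhibits $T$ on $L_\infty(\pi)$ as the Banach-space adjoint of $T$ on $L_1(\pi)$, so $\|S^n\|_{L_1 \to L_1} = \|S^n\|_{L_\infty \to L_\infty}$. Interpolating to $L_2$ at the midpoint then yields
\[
\rho^n \;=\; \|S^n\|_{L_2 \to L_2} \;\le\; \sqrt{\|S^n\|_{L_1 \to L_1}\,\|S^n\|_{L_\infty \to L_\infty}} \;=\; \|S^n\|_{L_\infty \to L_\infty} \;=\; 2\sup_{x \in \X}\|T^n(x, \cdot) - \pi\|_{TV},
\]
which is exactly the desired reduction. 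The main delicacy is checking that Riesz-Thorin and the $L_1$/$L_\infty$ duality apply cleanly to $S^n$ on the countably generated probability space $(\X,\pi)$. Conceptually, interpolation packages the $L_2$ spectral information of $T$ into an operator-norm inequality on $L_\infty$ that never requires exhibiting a near-extremal test function, which is exactly what dodges the obstruction preventing a direct extension of the finite-state eigenfunction proof.
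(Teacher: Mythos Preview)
Your argument is correct and takes a genuinely different route from the paper. The paper proceeds in two stages: first it invokes the sub-multiplicativity of $2\Delta_n$ (Roberts--Rosenthal 2004) to deduce, following Aldous, that $\limsup_n \Delta_n e^{n[-\ln(2\epsilon)]/\tau_\epsilon}$ is finite; second, using the spectral measure of the self-adjoint operator $T$, it builds for each $\delta>0$ a signed measure $\omega\in L_2(\pi)$ supported (spectrally) near $\pm\rho$ with $\|\omega T^n\|_{TV}\ge M(\rho e^{-\delta})^n$, relates this to $\Delta_n$ by decomposing $\omega$ into a difference of probability measures, and then compares exponents. Your proof bypasses both the sub-multiplicativity step and the explicit spectral-measure construction: by computing $\|S^n\|$ on $L_2$, $L_\infty$, and (via reversibility/duality) $L_1$, and interpolating, you obtain the clean non-asymptotic inequality $\rho^n\le 2\Delta_n$ for every $n$, from which the theorem is immediate. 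This is shorter and yields a sharper intermediate statement; the paper's approach has the virtue of being more self-contained, relying only on the spectral theorem and elementary total-variation manipulations rather than importing Riesz--Thorin. One small correction: since $L_\infty(\pi)$ carries the $\pi$-essential supremum, what you actually get is $\|S^n\|_{L_\infty\to L_\infty}=2\,\pi\text{-}\mathrm{ess\,sup}_x\|T^n(x,\cdot)-\pi\|_{TV}\le 2\Delta_n$, not equality with $2\Delta_n$; but only this inequality is needed, so the argument is unaffected.
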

\noindent When $\rho$ is close to 1 we have $-\ln \rho \approx 1-\rho$, so that one
can obtain a corresponding lower bound on $\tau_\epsilon$ in terms of the
inverse of the spectral gap $(1-\rho)$.  In particular,
\begin{align}\label{Eqn:AltBound}
\tau_\epsilon \geq \frac{\rho\ln (2\epsilon)^{-1}}{2(1-\rho)}.
\end{align}

\begin{proof}[Proof of Theorem~\ref{Thm:MixTime}]
Define 
$$\Delta_n \equiv \sup_{x \in \X} \|T^n(x,\cdot) -
\pi\|_{TV}.$$
\cite{aldo:1982} shows at
the beginning of his Proposition 8(a) that when $\X$ is finite and
$\epsilon = (2e)^{-1}$,
\begin{align}\label{Eqn:DeltaLim2}
\lim_{n \rightarrow \infty}\Delta_n e^{n\left[-\ln (2\epsilon)\right]/\tau_\epsilon} \leq (2\epsilon)^{-1}.
\end{align}
However, his proof also holds for general state spaces and general
$\epsilon$.  It relies on the sub-multiplicative property of
$2\Delta_n$, which is shown in general state spaces, for instance, as
Proposition 3(e) of \cite{robe:rose:2004}.

We will show that for all $\delta >0$, 
\begin{align}\label{Eqn:DeltaLim}
\lim_{n \rightarrow \infty} \Delta_n e^{n (\frac{1}{\beta} +
  \delta)} > 0
\end{align}
where $\beta \equiv -\frac{1}{\ln \rho}$.  This implies $\lim_{n
  \rightarrow \infty} \Delta_n e^{n (\frac{1}{\beta} + \delta)} =
  \infty $ for all $\delta > 0$.  
  Combining with (\ref{Eqn:DeltaLim2}), we have that
  $\frac{1}{\beta} + \delta \geq \frac{-\ln (2\epsilon)}{\tau_{\epsilon}}$ for all $\delta>0$,
  which implies $\tau_\epsilon \geq -\beta \ln(2\epsilon)$ as desired.

We prove (\ref{Eqn:DeltaLim}) using the spectral
representation of $T^n$.  Since $T$ is geometrically ergodic with
spectral radius $\rho$, for any probability measure $\mu \in L_2(\pi)$
we have
$$\mu T^n = \int_{-\rho}^\rho \lambda^n \mu \xi(d\lambda)  \qquad \forall n \in \N$$ where
$\xi$ is the spectral measure corresponding to $T$ acting on
$L_2(\pi)$ \citep{conw:85}.  Additionally, for any $\delta >0$, either $\xi((\rho
e^{-\delta}, \rho])$ or $\xi([-\rho, -\rho e^{-\delta}))$ is not the
zero operator.  Since $T$ is reversible, one can then apply the
approach in the proof of Theorem 2.1 in \cite{robe:rose:1997} to
construct a signed measure $\omega \in L_2(\pi)$ and a constant $M >
0$ such that $\omega(\X)=0$ and
\begin{align}\label{Eqn:Omega}
\|\omega T^n\|_{TV} \geq M (\rho e^{- \delta})^n \qquad \forall n \in \N.
\end{align}
Without loss of generality $\omega$ can be chosen to have
$\|\omega\|_{TV} \leq 1$.  

Using the triangle inequality, since $\|\omega\|_{TV} \leq 1$ and
  $\omega(\X) = 0$,
\begin{align}\label{Eqn:Delta}
 \|\omega T^n\|_{TV} =  \|\omega T^n - \omega(\X) \pi\|_{TV} \leq
 2\sup_{\mu \in L_2(\pi): \mu(\X) = 1, \mu \geq 0} \| \mu T^n -\pi \|_{TV} \qquad \forall n \in \N
\end{align}
where the supremum is taken over probability measures $\mu\in L_2(\pi)$. Finally, we will show that 
\begin{align}\label{Eqn:TVsup}
 \sup_{\mu \in L_2(\pi): \mu(\X) = 1, \mu \geq 0} \| \mu T^n -\pi \|_{TV} \leq \Delta_n  \qquad \forall n \in \N.
\end{align}
Combining (\ref{Eqn:Omega})-(\ref{Eqn:TVsup}), we have that
$\Delta_n \geq \frac{M}{2}  (\rho e^{- \delta})^n$, so $\Delta_n e^{n (\frac{1}{\beta} +
  \delta)} \geq \frac{M}{2} > 0$ as desired.

To show (\ref{Eqn:TVsup}), Proposition 3(b) of \cite{robe:rose:2004}
implies that for any probability measure $\mu$ on $\X$ and any $n$,
\begin{align*}
\|\mu T^n - \pi\|_{TV} &= \sup_{f: |f|_\infty \leq 1} 
\left| \int f(y) \int \mu(dx) T^n(x, dy) -
\int f(y) \pi(dy) \right|\\
&\leq \sup_{f: |f|_\infty \leq 1} \sup_{x \in \X}  \left| \int f(y) T^n(x, dy) -
\int f(y) \pi(dy) \right|\\
&= \sup_{x \in \X} \sup_{f: |f|_\infty \leq 1} \left| \int f(y) T^n(x, dy) -
\int f(y) \pi(dy) \right|\\
&= \sup_{x \in \X} \|T^n(x,\cdot) - \pi\|_{TV}.
\end{align*}
\end{proof}

\bibliographystyle{ECA_jasa}
\bibliography{/home/fs01/dbw59/docs/samplers/other_peoples_work/bibliography/multimode}

\end{document}